\documentclass[14pt]{extarticle}
\usepackage{animate}
\usepackage{graphicx}
\usepackage{amsfonts,amssymb,amsthm}
\usepackage[intlimits]{amsmath}%
\usepackage{enumitem}
\usepackage{cancel}
\usepackage[backend=bibtex,style=numeric-comp,sorting=none,doi=true,url=false]{biblatex}
\usepackage[top=2cm,bottom=2cm,left=2cm,right=2cm]{geometry} 
\usepackage{tikz}
\usepackage{hyperref}
\usepackage{xurl}
\hypersetup{
  colorlinks   = true, 
  urlcolor     = blue, 
  linkcolor    = red, 
  citecolor   = black 
}

\newcommand{\tox}[1]{\xrightarrow[#1]{}}

\newcommand{\y}{\normalfont \textbf{y}}

\newcommand{\thref}[1]{\hyperref[#1]{Theorem \nolinebreak\ref*{#1}}}
\newcommand{\theoremref}[1]{\hyperref[#1]{Theorem \nolinebreak\ref*{#1}}}
\newcommand{\lemmaref}[1]{\hyperref[#1]{Lemma \nolinebreak\ref*{#1}}}

\newcommand{\ccancel}[2]{%
  \tikz[baseline=(tocancel.base)]{
    \node[inner sep=0pt, outer sep=0pt] (tocancel) {$#2$};
    \draw[#1, line width=0.5pt] (tocancel.south west) -- (tocancel.north east);
  }%
}

\theoremstyle{definition}
\newtheorem{theorem}{Theorem}[section]

\newtheorem{corollary}{Corollary}[theorem]

\numberwithin{equation}{section}

\title{Identification of limit sets of a non-ideal system "spherical pendulum-excitation source"}
\author{Serhii Donetskyi \and Aleksandr Shvets\and\hfill\\[-1em]
\small\itshape Institute of Mathematics of the National Academy of Sciences of Ukraine, Kyiv, Ukraine}
\date{}

\begin{document}
\maketitle

\begin{abstract}
We investigate the long-term dynamics of a five-dimensional nonlinear system 
describing the non-ideal excitation of a spherical pendulum coupled to a 
limited-power electric motor. 
By analyzing the phase trajectories $\mathbf{y}(t) = (y_1, y_2, y_3, y_4, y_5)$, 
we prove several structural theorems regarding the system's limit sets. 

First, we show that the bilinear combination $y_1 y_5 - y_2 y_4$ satisfies 
a closed linear differential equation, which implies its vanishing on every 
limit set. This leads to a fundamental algebraic identity that holds for 
all asymptotic states. 
Furthermore, we establish proportionality relations between the pairs $(y_1, y_4)$ 
and $(y_2, y_5)$ within these sets. We demonstrate that the dynamics restricted 
to any limit set reduce from the original five-dimensional space to an explicit 
three-dimensional subsystem parameterized by a single constant $K$. 

Finally, for the dissipative regime characterized by $C \le -2$, we prove the 
global asymptotic stability of the equilibrium point $\tilde{\mathbf{y}} = (0, 0, -F/E, 0, 0)$, 
showing that all trajectories satisfy $y_1^2 + y_2^2 + y_4^2 + y_5^2 \to 0$. 
These results provide a rigorous basis for the structural description of limit sets and simplify the further analysis of deterministic chaos 
in pendulum-motor models.
\end{abstract}
\noindent \textbf{Keywords:} non-ideal systems, spherical pendulum, limit sets, reduction of dimensionality, deterministic chaos.

\noindent \textbf{2020 Mathematics Subject Classification:} 34C60, 34D20, 37D45, 70K50.
\section{Introduction}
Mathematical equations derived from describing the oscillations of various pendulum systems—planar, spherical (spatial), double, etc.—find wide application in the approximate modeling of systems of a significantly more complex nature than pendulums themselves. Systems whose mathematical modeling involves pendulum-based models include various hydrodynamic systems \cite{Lukovsky1975, KrasnopolskayaShvets1991, Ibr, falt, luk2}, electroelastic systems \cite{sh08, Shvets2019, Donetskyi, Pechuk2023}, and biological systems \cite{Pechuk2020, Pechuk2022}. The dynamic behavior of pendulum systems becomes most interesting when such systems are non-ideal in the sense of Sommerfeld-Kononenko \cite{som, kon}. Such non-ideal systems typically consist of two main components. The first is an oscillatory load, represented by a particular type of pendulum. The second is an excitation source that drives the pendulum's oscillations. A fundamental assumption is that the power consumed by the oscillatory load is comparable to the power of the excitation source. Therefore, mathematical modeling of non-ideal systems must account for the interaction between the oscillatory load and the excitation source. Such feedback can give rise to new steady-state interaction regimes, including chaotic ones. In contrast, ideal pendulum systems assume that the excitation source possesses arbitrarily large (virtually unlimited) power; thus, the influence of the pendulum on the source can be neglected.

In works \cite{KShvets1990, sh08, ShM2012}, the existence of various chaotic attractors in the non-ideal "plane pendulum – limited power source" system was established, and scenarios of the transition to chaos were examined. It should be particularly emphasized that the emergence of deterministic chaos in such systems is possible only due to the nonlinear interaction between the plane pendulum and its excitation source. Conversely, if the oscillations of a plane pendulum are considered in an ideal formulation — assuming the power of the excitation source to be unlimited — then deterministic chaos in such systems is impossible \cite{sh08, Kuznetsov2011}.

\section{Mathematical model of the system ``spherical pendulum--electric motor''}
Among non-ideal pendulum systems, a special place is occupied by the "spherical pendulum -- electric motor" system in the case where the spatial oscillations of the pendulum are driven by a strictly vertical excitation of its support point by a limited power electric motor. In this case, the limit sets of such a system are the so-called Milnor attractors, or maximal attractors, which were first introduced in the work \cite{Milnor1985}.

We briefly consider the algorithm for constructing the equations of motion for such a system. So, consider the non--ideal system ``spherical pendulum -- electric motor'' in which the motor shaft is connected to the suspension point of a physical pendulum by a crank--slider mechanism; the pendulum performs spatial (spherical) oscillations (see Fig.~\ref{fig:sphpe_schematic}).

\begin{figure}[ht]
  \centering
  \includegraphics[height=0.55\textheight,keepaspectratio]{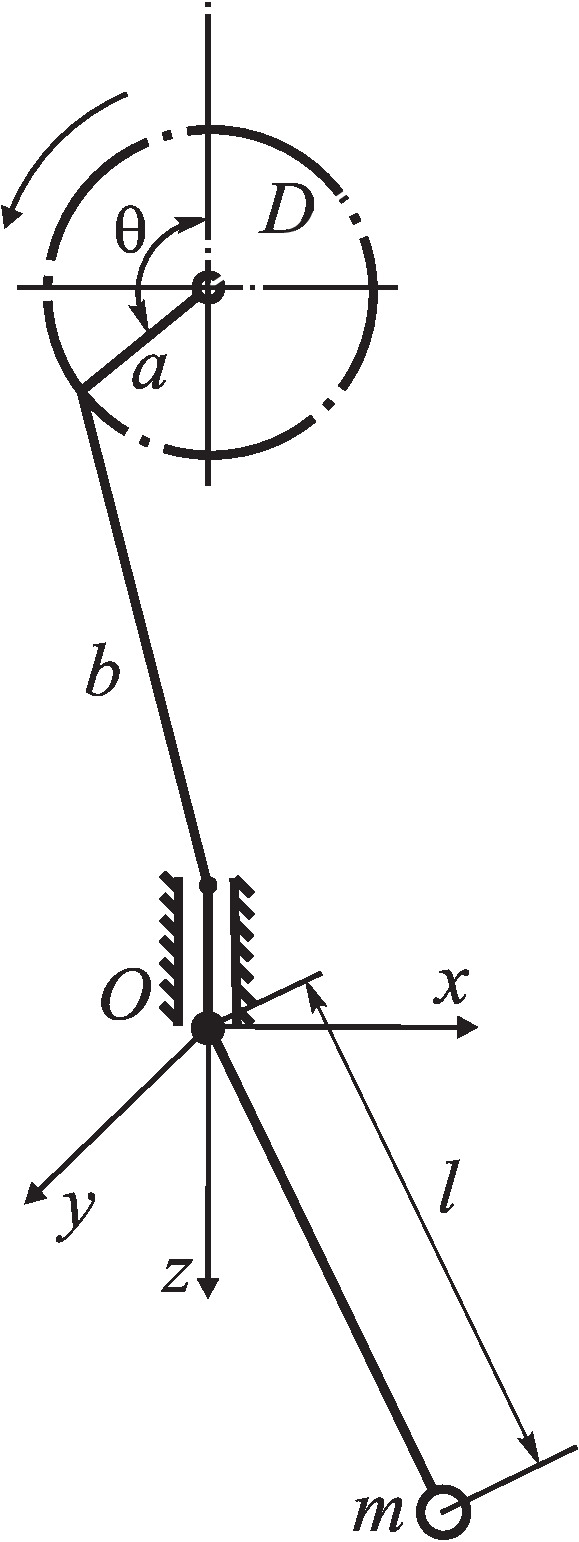}
  \caption{Schematic of the coupled ``spherical pendulum--excitation source'' system used in the derivation (crank length $a$, connecting rod length $b$, pendulum length $l$, motor shaft angle $\theta$).}
  \label{fig:sphpe_schematic}
\end{figure}

Introduce a Cartesian coordinate system $Oxyz$ and denote by $a$ the crank length (and by $b$ the slider length), assuming $b\gg a$.
Introduce angles $\alpha,\beta$ by
\[
  x=l\sin\alpha,\qquad y=l\sin\beta,
\]
where $l$ is the pendulum length.

The equations of motion for the limited--power system were obtained in \cite{KrasnopolskayaShvets1992,Shvets2007}.
In the uncoupled case (spherical pendulum without interaction with a power source) related results can be found in \cite{Miles1962,Miles1984Resonant,Miles1984Faraday}.
Following \cite{KrasnopolskayaShvets1992,Shvets2007}, the coupled dynamics can be written as
\begin{equation}\label{eq:dimensional_model}
\begin{aligned}
I\ddot\Theta &=
L(\dot\Theta)-H(\dot\Theta)
-mla\Big(
\frac{a}{l}\ddot\Theta\sin^2\Theta
\;+\;\frac{a}{l}\dot\Theta^{2}\sin\Theta\cos\Theta
\;+\;\frac{g}{l}\sin\Theta\\
&\qquad\qquad
-(\dot\alpha^2+\dot\beta^2)\sin\Theta
-(\alpha\ddot\alpha+\beta\ddot\beta)\sin\Theta
\Big),\\
\ddot\alpha &+ \omega_0^2\alpha - \frac{\alpha^3}{6} + \frac{\alpha\beta^2}{2}
+ \delta_1\dot\alpha + \alpha(\dot\beta^2+\beta\ddot\beta)
- \frac{a}{l}\alpha\dot\Theta^2\cos\Theta + \ddot\Theta\sin\Theta = 0,\\
\ddot\beta &+ \omega_0^2\beta - \frac{\beta^3}{6} + \frac{\alpha^2\beta}{2}
+ \delta_1\dot\beta + \beta(\dot\alpha^2+\alpha\ddot\alpha)
- \frac{a}{l}\beta\dot\Theta^2\cos\Theta + \ddot\Theta\sin\Theta = 0,
\end{aligned}
\end{equation}
where $\Theta$ is the motor shaft angle, $L(\dot\Theta)$ is the motor torque, $H(\dot\Theta)$ is the internal resisting torque, $\omega_0=\sqrt{g/l}$ is the natural frequency of the pendulum, and $\delta_1$ is the damping coefficient.
System \eqref{eq:dimensional_model} is essentially nonlinear, so explicit solutions are unavailable.

To simplify \eqref{eq:dimensional_model}, introduce the small parameter
\[
  \varepsilon=\frac{a}{l},
\]
and assume the principal parametric resonance, i.e. the rotation speed $\dot\Theta$ is close to $2\omega_0$:
\begin{equation}\label{eq:principal_resonance}
  \dot\Theta(t)=2\omega_0+\varepsilon\omega_0 y_3(t).
\end{equation}
Make the change of variables \cite{KrasnopolskayaShvets1992}
\begin{equation}\label{eq:change_of_variables}
\left\{
\begin{array}{l}
\alpha(t)=\varepsilon^{1/2}\Big(y_1(\tau)\cos\frac{\Theta(t)}{2}+y_2(\tau)\sin\frac{\Theta(t)}{2}\Big),\\[1mm]
\beta(t)=\varepsilon^{1/2}\Big(y_4(\tau)\cos\frac{\Theta(t)}{2}+y_5(\tau)\sin\frac{\Theta(t)}{2}\Big),
\end{array}
\right.
\end{equation}
introducing the slow time
\begin{equation}\label{eq:slow_time}
  \tau=\frac{\varepsilon}{4}\dot\Theta(t).
\end{equation}
Substituting \eqref{eq:principal_resonance}--\eqref{eq:slow_time} into \eqref{eq:dimensional_model} and averaging over the fast time $\Theta(t)$ \cite{BogoliubovMitropolsky1974,Mitropolsky1971,BajajJohnson1990}, one obtains an averaged five--dimensional system \cite{KrasnopolskayaShvets1992} for $\y=(y_1,\dots,y_5)$.
After rescaling the slow time (and, for simplicity, denoting it again by $t$), the system reads:
\begin{equation}
\begin{array}{l}
\displaystyle \dot y_1 =Cy_{1}-[y_{3}+\frac18(y_{1}^2+y_{2}^2+y_{4}^2+y_{5}^2)]y_{2}-\frac34(y_{1}y_{5}-y_{2}y_{4})y_{4}+2y_{2};
\\[4mm]
\displaystyle \dot y_2 =Cy_{2}+[y_{3}+\frac18(y_{1}^2+y_{2}^2+y_{4}^2+y_{5}^2)]y_{1}-\frac34(y_{1}y_{5}-y_{2}y_{4})y_{5}+2y_{1};
\\[4mm]
\displaystyle \dot y_3 =D(y_{1}y_{2}+y_{4}y_{5})+Ey_{3}+F;
\\[4mm]
\displaystyle \dot y_4 =Cy_{4}-[y_{3}+\frac18(y_{1}^2+y_{2}^2+y_{4}^2+y_{5}^2)]y_{5}+\frac34(y_{1}y_{5}-y_{2}y_{4})y_{1}+2y_{5};
\\[4mm]
\displaystyle \dot y_5 =Cy_{5}+[y_{3}+\frac18(y_{1}^2+y_{2}^2+y_{4}^2+y_{5}^2)]y_{4}+\frac34(y_{1}y_{5}-y_{2}y_{4})y_{2}+2y_{4}.
\end{array}
\label{eq:full_system}
\end{equation}
Here $y_1,\dots,y_5$ are phase variables describing the averaged coupled dynamics, while $C,D,E,F$ are parameters determined by physical and geometric characteristics of the pendulum and the motor.
The derivation uses a linear approximation of the static motor characteristic \cite{kon}; in particular (cf. \cite{KrasnopolskayaShvets1992}),
\[
  D=-\frac{2ml^2}{I+0.5ma^2},\qquad C=\frac{\delta_1}{\omega_0},
\]
and $E$ is the slope of the motor static characteristic. The model \eqref{eq:full_system} is the 5--dimensional non-linear, non-ideal system of differential equations.

The existence of deterministic chaos in system \eqref{eq:full_system} was already revealed in early works \cite{KrasnopolskayaShvets1992, Shvets2007, sh08}. However, only more detailed studies conducted in \cite{ShvetsDonetskyiLimitSets2021, DonetskyiShvets2022, DonetskyiShvetsJMS2023} made it possible to establish that the limit sets of system \eqref{eq:full_system} are Milnor attractors (maximal attractors). These are remarkably interesting limit sets that, generally speaking, do not satisfy the "classical" definition of an attractor. It was established that the maximal attractors of system \eqref{eq:full_system} can be either regular or chaotic.

Despite the fact that Milnor attractors are not attractors from a traditional perspective, the transition to chaos for such attractors nevertheless follows scenarios inherent to "classical" chaotic dynamics. In works \cite{ShvetsDonetskyiLimitSets2021, DonetskyiShvets2022, DonetskyiShvetsJMS2023}, the realization of Feigenbaum scenarios \cite{fei1, man-p} was established for Milnor attractors, as well as the implementation of a relatively new scenario of generalized intermittency \cite{Shvets2021}.

\section{First property of limit sets}
It should be noted that all previous studies of system \eqref{eq:full_system} were carried out using numerical methods and algorithms, the most detailed application of which is presented in \cite{sh08}. In what follows, we formulate and prove a series of statements for the analytical investigation of this problem.

Let us start by deriving a closed scalar equation for $y_1 y_5 - y_2 y_4$.
For that consider its time derivative:
\begin{equation}
  \begin{array}{l}
  \dfrac{d}{dt} (y_1 y_5 - y_2 y_4) = y_1 \dot y_5 + \dot y_1 y_5 - y_2 \dot y_4 - \dot y_2 y_4 \\[5mm]
  = y_1 \Big[ Cy_{5}+[y_{3}+\frac18(y_{1}^2+y_{2}^2+y_{4}^2+y_{5}^2)]y_{4}+\frac34(y_{1}y_{5}-y_{2}y_{4})y_{2}+2y_{4} \Big] \\[4mm]
  + y_5 \Big[ Cy_{1}-[y_{3}+\frac18(y_{1}^2+y_{2}^2+y_{4}^2+y_{5}^2)]y_{2}-\frac34(y_{1}y_{5}-y_{2}y_{4})y_{4}+2y_{2} \Big] \\[4mm]
  - y_2 \Big[ Cy_{4}-[y_{3}+\frac18(y_{1}^2+y_{2}^2+y_{4}^2+y_{5}^2)]y_{5}+\frac34(y_{1}y_{5}-y_{2}y_{4})y_{1}+2y_{5} \Big] \\[4mm]
  - y_4 \Big[ Cy_{2}+[y_{3}+\frac18(y_{1}^2+y_{2}^2+y_{4}^2+y_{5}^2)]y_{1}-\frac34(y_{1}y_{5}-y_{2}y_{4})y_{5}+2y_{1} \Big] \\[5mm]
  = Cy_{5}y_1\ccancel{red}{+[y_{3}+\frac18(y_{1}^2+y_{2}^2+y_{4}^2+y_{5}^2)]y_{4}y_1 + 2 y_4 y_1} \ccancel{green}{+\frac34(y_{1}y_{5}-y_{2}y_{4})y_{2}y_1} \\[4mm]
  + Cy_{1}y_5 \ccancel{blue}{-[y_{3}+\frac18(y_{1}^2+y_{2}^2+y_{4}^2+y_{5}^2)]y_{2}y_5 + 2y_2 y_5} \ccancel{cyan}{-\frac34(y_{1}y_{5}-y_{2}y_{4})y_{4}y_5} \\[4mm]
  -Cy_{4}y_2\ccancel{blue}{+[y_{3}+\frac18(y_{1}^2+y_{2}^2+y_{4}^2+y_{5}^2)]y_{5}y_2 - 2y_5 y_2} \ccancel{green}{- \frac34(y_{1}y_{5}-y_{2}y_{4})y_{1} y_2} \\[4mm]
  -Cy_{2}y_4\ccancel{red}{-[y_{3}+\frac18(y_{1}^2+y_{2}^2+y_{4}^2+y_{5}^2)]y_{1}y_4 - 2y_1 y_4} \ccancel{cyan}{+ \frac34(y_{1}y_{5}-y_{2}y_{4})y_{5} y_4} \\[4mm]
  = 2C(y_1 y_5 - y_2 y_4),
  \end{array}
\end{equation}
or, in short,
\begin{equation}
  \dfrac{d}{dt} (y_1 y_5 - y_2 y_4) = 2C(y_1 y_5 - y_2 y_4).
\end{equation}
Hence
\begin{equation}
  \label{eq:closed_form}
y_1 y_5 - y_2 y_4 = (y_{10} y_{50} - y_{20} y_{40}) \cdot e^{2C\cdot t},
\end{equation}
where $y_{10},y_{20},y_{40},y_{50}$ are the initial values.

This allows us to state the following theorem.
\begin{theorem}
\label{thm:first_identity}
Let $L$ be a limit set of \eqref{eq:full_system}. Then any point $\y_0 = (y_{10}, y_{20}, y_{30}, y_{40}, y_{50}) \in L$ satisfies
\begin{equation} \label{eq:first_identity}
  y_{10} y_{50} - y_{20} y_{40} = 0.
\end{equation}
\end{theorem}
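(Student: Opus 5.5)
The plan is to use the closed-form expression \eqref{eq:closed_form} together with continuity of the function
\[
  w(\y) = y_1 y_5 - y_2 y_4 .
\]
I interpret a limit set $L$ as the $\omega$-limit set of some trajectory $\y(t)$ of \eqref{eq:full_system}. Then every $\y_0 \in L$ can be written as $\y_0 = \lim_{n\to\infty} \y(t_n)$ for some sequence $t_n \to +\infty$. Since $w$ is a polynomial, hence continuous on $\mathbb{R}^5$, we get
\[
  w(\y_0) = \lim_{n\to\infty} w(\y(t_n)) = \lim_{n\to\infty} w_0\, e^{2C t_n},
  \qquad w_0 = y_{10} y_{50} - y_{20} y_{40},
\]
where $w_0$ is now evaluated at the initial point of the trajectory. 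The proof then reduces to evaluating this limit.

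\emph{Case $C<0$.} This is the dissipative situation the paper has in mind, since $C$ is tied to damping and the later results assume $C \le -2$. Here $e^{2Ct_n} \to 0$, so $w(\y_0) = 0$ immediately.

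\emph{Case $C>0$.} If $w_0 \neq 0$, then $|w(\y(t_n))| \to \infty$. This contradicts the convergence of $w(\y(t_n))$ to the finite number $w(\y_0)$. So a trajectory with $w_0 \neq 0$ has an empty $\omega$-limit set, and any trajectory that does have limit points must satisfy $w_0 = 0$. In that case $w \equiv 0$ along the whole trajectory, and again $w(\y_0) = 0$.

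The main obstacle is the degenerate case $C=0$. There $w$ is a first integral, so it is constant on each trajectory and on its $\omega$-limit set. Nothing then forces that constant to be $0$, and the statement can fail for limit sets of orbits with $w_0 \neq 0$. I would therefore state explicitly that the theorem is meant for $C \neq 0$, and in practice for the physical regime $C<0$.

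I would also check how the paper uses the word ``limit set''. If $\alpha$-limit sets are meant as well, the argument is the same with $t_n \to -\infty$ and the roles of the signs of $C$ exchanged.
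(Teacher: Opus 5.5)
Your argument is correct for $C\neq 0$, and it takes a different route from the paper.

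\textbf{Comparison with the paper's route.} The paper argues by contradiction along the orbit through $\mathbf{y}_0$ itself. Writing $w=y_1y_5-y_2y_4$, it notes that $|w(\mathbf{y}_0)-w(\mathbf{y}(t,\mathbf{y}_0))|=|w(\mathbf{y}_0)|\,|1-e^{2Ct}|$ increases in $t$, and concludes that this orbit ``diverges from $\mathbf{y}_0$''. You instead work with the orbit whose $\omega$-limit set is $L$, and pass to the limit along $t_n\to\infty$ using continuity of $w$.

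Yours is the more rigorous of the two. As written, the paper's computation only shows that $\mathbf{y}_0$ is not recurrent, i.e.\ it does not lie in its own $\omega$-limit set. That is no contradiction, because limit sets can contain non-recurrent points, for example points on heteroclinic connections.

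Repairing the paper's argument requires $L$ to be compact and invariant. One then uses that the forward orbit (for $C>0$) or the backward orbit (for $C<0$) of $\mathbf{y}_0$ stays in $L$ while $|w|\to\infty$. That repaired version buys applicability to any compact invariant set, not only to $\omega$-limit sets. Your version needs neither compactness nor invariance, so it covers every $\omega$-limit set, bounded or not. It also extends by continuity of $w$ to any closure of a union of $\omega$-limit sets, which a Milnor attractor is, by its minimality.

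\textbf{The case $C=0$.} Your caveat is right, and the paper's proof silently breaks there, since its function $D(t)$ is identically $0$ rather than increasing. The statement is in fact false at $C=0$. Take $C=0$, $E=-1$, $F=2$ and any $D$, and consider the point $\mathbf{y}^*=(2/\sqrt3,\,0,\,2,\,0,\,2\sqrt{5/3})$. At this point $y_3+\frac18(y_1^2+y_2^2+y_4^2+y_5^2)=3$ and $\frac34(y_1y_5-y_2y_4)=\sqrt5$. Substituting into \eqref{eq:full_system} gives $\dot{\mathbf{y}}=0$. So $\{\mathbf{y}^*\}$ is a limit set, yet $y_1y_5-y_2y_4=4\sqrt5/3\neq 0$. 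You should cite this example to back up your remark that the hypothesis $C\neq 0$ must be added.
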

\begin{proof}
By contradiction.

Suppose that there exists a point $\y_0 = (y_{10}, y_{20}, y_{30}, y_{40}, y_{50}) \in L$ such that $y_{10} y_{50} - y_{20} y_{40} \neq 0$.

Let $\y(t, \y_0) = (y_1(t), y_2(t), y_3(t), y_4(t), y_5(t))$ be the solution of \eqref{eq:full_system} starting from $\y_0$.
Using closed scalar form \eqref{eq:closed_form}, consider the function
$$
D(t) = \Big| [y_{10} y_{50} - y_{20} y_{40}] - [y_{1}(t) y_{5}(t) - y_{2}(t) y_{4}(t)] \Big| = $$
$$
\Big| (y_{10} y_{50} - y_{20} y_{40}) - (y_{10} y_{50} - y_{20} y_{40}) \cdot e^{2C\cdot t} \Big|
= \Big| y_{10} y_{50} - y_{20} y_{40} \Big| \cdot \Big| 1 - e^{2C\cdot t} \Big|.
$$
The constructed function $D(t)$ is monotonically increasing for $t \geq 0$, which implies that the solution $\y(t, \y_0)$ diverges from $\y_0$ as $t \to \infty$. This contradicts the assumption that $\y_0$ is on a limit set.
\end{proof}
\begin{corollary}
Since $L$ is a limit set, then any its solution $\y(t, \y_0)$ also lies on it. Thus, it satisfies \eqref{eq:first_identity}, i.e.
$$
y_{1}y_{5} - y_{2}y_{4}= 0.
$$
\end{corollary}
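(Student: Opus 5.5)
The plan is to combine \thref{thm:first_identity} with one standard fact: a limit set is invariant under the flow of \eqref{eq:full_system}. With that in hand, the corollary is a pointwise application of the theorem along the orbit. I will also give a second, more direct route through the closed form \eqref{eq:closed_form}, which avoids invariance altogether. Both routes rely on \thref{thm:first_identity} as stated; note that its argument needs $C\neq 0$, since for $C=0$ the quantity $y_1y_5-y_2y_4$ is a conserved integral rather than a decaying or growing one.

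\textbf{Step 1 (invariance of $L$).} Let $L=\omega(\mathbf{x})$ be the $\omega$-limit set of some trajectory $\y(t,\mathbf{x})$, and take $\y_0\in L$.
\begin{itemize}
\item By definition there is a sequence $t_n\to\infty$ with $\y(t_n,\mathbf{x})\to\y_0$.
\item The right-hand side of \eqref{eq:full_system} is polynomial, hence locally Lipschitz. So solutions depend continuously on initial data on any compact time interval on which $\y(t,\y_0)$ exists.
\item Using the group property, $\y(t+t_n,\mathbf{x})=\y(t,\y(t_n,\mathbf{x}))\to\y(t,\y_0)$ as $n\to\infty$.
\item Since $t+t_n\to\infty$, this shows $\y(t,\y_0)\in L$ for every $t$ in the maximal existence interval.
\end{itemize}
If $L$ comes from a bounded trajectory, $L$ is compact. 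The solution through $\y_0$ then stays in a compact set and exists for all $t\in\mathbb{R}$. This existence issue is the only technical point I expect to have to handle; for an unbounded forward orbit I would restrict to the existence interval, which is all the corollary needs.

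\textbf{Step 2 (apply the theorem).} For each such $t$, the point $\y(t,\y_0)$ lies in $L$. \thref{thm:first_identity} applied at that point gives $y_1(t)y_5(t)-y_2(t)y_4(t)=0$.

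\textbf{Alternative direct route.} Invariance is not actually needed. \thref{thm:first_identity} applied at the single point $\y_0$ gives $y_{10}y_{50}-y_{20}y_{40}=0$. Then \eqref{eq:closed_form} yields
\[
y_1(t)y_5(t)-y_2(t)y_4(t)=0\cdot e^{2Ct}=0
\]
for all $t$ in the existence interval, including negative $t$. This route works for any $C$, once the theorem is available at $\y_0$. I would present this as the main argument and keep Step 1 as the justification of the phrase ``any its solution also lies on it'' in the statement.
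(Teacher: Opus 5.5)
Your argument is correct. Steps 1--2 are exactly the paper's argument: the paper asserts that a limit set is invariant and then applies \thref{thm:first_identity} at every point of the orbit. Your Step 1 supplies the standard continuous-dependence proof of that invariance, which the paper omits.

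The route you propose as the main one is genuinely different. It uses \thref{thm:first_identity} only at the single point $\mathbf{y}_0$, and then the linear law $\frac{d}{dt}(y_1y_5-y_2y_4)=2C(y_1y_5-y_2y_4)$, i.e.\ \eqref{eq:closed_form}. That law shows the hypersurface $\{y_1y_5-y_2y_4=0\}$ is invariant under the flow for every value of $C$. This buys three things:
\begin{enumerate}
\item a stronger statement: the identity propagates forward and backward along any solution starting on that hypersurface, whether or not the solution stays in $L$;
\item independence from topological facts about limit sets;
\item isolation of the only real restriction, $C\neq 0$, inside \thref{thm:first_identity} itself, where you correctly locate it.
\end{enumerate}
What it does not give is the first half of the corollary's sentence, that $\mathbf{y}(t,\mathbf{y}_0)$ lies on $L$. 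For that you still need Step 1, as you note.

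The existence-interval caveat can be dropped. Since $|8(y_1y_2+y_4y_5)|\le 4(y_1^2+y_2^2+y_4^2+y_5^2)$, the quantity $y_1^2+y_2^2+y_4^2+y_5^2$ grows at most exponentially in both time directions. Then $y_3$ solves a linear equation with continuous forcing, so every solution of \eqref{eq:full_system} exists on all of $\mathbb{R}$.
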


\section{Second property of limit sets}
Next, let us consider
\begin{equation}
  \dot y_1 y_4 - y_1 \dot y_4 = \dfrac18(y_1 y_5-y_2 y_4)(-16-5 y_1^2 +y_2^2+8 y_3 - 5 y_4^2+y_5^2),
\end{equation}
or, if we restrict to a limit set where \eqref{eq:first_identity} holds, we get
\begin{equation}
\dot y_1 y_4 - y_1 \dot y_4 = 0.
\end{equation}
By separation of variables, we can rewrite this as
\begin{equation}
\dfrac{\dot y_1}{y_1} = \dfrac{\dot y_4}{y_4}.
\end{equation}
Integrating both sides, we get
\begin{equation}
\label{eq:second_identity_scalar}
A_{14} \cdot y_1 = A_{41} \cdot y_4.
\end{equation}
The latter is also true for an initial state, thus the following must hold:
\begin{equation}
A_{14} \cdot y_{10} = A_{41} \cdot y_{40}.
\end{equation}
Solving for $A_{14}$ and $A_{41}$, we get
\begin{equation}
  \left\{
  \begin{array}{l}
    A_{14} = A y_{40}, \\
    A_{41} = A y_{10}, \\
  \end{array}
  \right.
\end{equation}
where $A$ is an arbitrary constant.

Substituting $A_{14}$ and $A_{41}$ into \eqref{eq:second_identity_scalar} and stripping the common factor $A$, we get
\begin{equation}
  y_{40} \cdot y_1 - y_{10} \cdot y_4 = 0.
\end{equation}

Similarly, by considering $\dot y_2 y_5 - y_2 \dot y_5$, we deduce that
\begin{equation}
y_{50} \cdot y_2 - y_{20} \cdot y_5 = 0.
\end{equation}
Thus we got second property of limit sets:
\begin{theorem}
\label{thm:second_identity}
Let $L$ be a limit set of \eqref{eq:full_system}. Then any point $\y_0 = (y_{10}, y_{20}, y_{30}, y_{40}, y_{50}) \in L$ and its solution $\y(t, \y_0) = (y_1(t), y_2(t), y_3(t), y_4(t), y_5(t))$ satisfies
\begin{equation}
  \label{eq:second_identity}
  \left\{
  \begin{array}{l}
    y_{40} \cdot y_1 - y_{10} \cdot y_4 = 0, \\
    y_{50} \cdot y_2 - y_{20} \cdot y_5 = 0.
  \end{array}
  \right.
\end{equation}
\end{theorem}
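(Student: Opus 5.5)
The plan is to avoid the separation-of-variables step, since it breaks down where $y_1$ or $y_4$ vanishes. Instead, I would show directly that the planar vector $(y_1(t),y_4(t))$ keeps a fixed direction along any solution lying in $L$. By the Corollary of \thref{thm:first_identity}, every solution $\y(t,\y_0)$ with $\y_0\in L$ satisfies $y_1y_5-y_2y_4\equiv 0$ for all $t$. Abbreviate $S=y_3+\frac18(y_1^2+y_2^2+y_4^2+y_5^2)+2$. On $L$ the terms containing $\frac34(y_1y_5-y_2y_4)$ vanish, so the $(y_1,y_4)$ equations reduce to
\begin{equation*}
\dot y_1 = Cy_1 - S\,y_2,\qquad \dot y_4 = Cy_4 - S\,y_5 .
\end{equation*}

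Next, fix the initial vector $(y_{10},y_{40})$ and consider $W(t)=y_{40}\,y_1(t)-y_{10}\,y_4(t)$, which is exactly the quantity the first line of \eqref{eq:second_identity} requires to vanish. Differentiating gives $\dot W = C\,W - S\,(y_{40}y_2-y_{10}y_5)$. This is not closed in $W$, so I would also introduce the companion $V(t)=y_{40}y_2-y_{10}y_5$ and compute $\dot V$ in the same way. The hope is that the pair $(W,V)$ satisfies a linear homogeneous system with continuous coefficients depending on $t$ through $S(t)$, with initial data $W(0)=0$. For uniqueness to force $W\equiv 0$ one also needs $V(0)=0$, i.e.\ $y_{40}y_{20}-y_{10}y_{50}=0$.

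This is the main obstacle. \thref{thm:first_identity} supplies $y_{10}y_{50}=y_{20}y_{40}$, which is a different combination. So I would reorganise the argument around the vectors $u=(y_1,y_4)$ and $v=(y_2,y_5)$. The identity $y_1y_5-y_2y_4=0$ says $u\parallel v$ for all $t$. On $L$, the map $(u,v)\mapsto(\dot u,\dot v)$ is $\dot u=Cu-Sv$ and $\dot v=Cv+Su$, with the same scalar coefficients in each component. It follows that the span of $u$ and $v$ is invariant, and I expect to show it is a fixed line $\ell$ in $\mathbb R^2$. Concretely, write $u=r\,e(t)$ and $v=s\,e(t)$ with $e$ a unit vector. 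Parallelism plus the linear scalar structure should give $\dot e\perp e$ together with $\dot e\parallel e$, whence $\dot e=0$.

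The degenerate case $u=v=0$ needs separate handling. Since $(u,v)=0$ is invariant under the flow, any trajectory in $L$ through such a point stays there, and then \eqref{eq:second_identity} holds trivially. Once the line $\ell$ is shown to be fixed, $(y_1(t),y_4(t))\in\ell\ni(y_{10},y_{40})$, which gives the first relation. Since $(y_2(t),y_5(t))$ lies in the same line and $(y_{20},y_{50})\in\ell$, the second relation follows likewise. Wherever $u$ and $v$ are both nonzero, I would carry out the $\dot e=0$ computation explicitly by differentiating $u\wedge v=0$ and using both equations. Continuity would then extend the conclusion across the isolated instants where one of them vanishes.
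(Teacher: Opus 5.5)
Your argument is correct, but it is organised differently from the paper's. The paper treats the two pairs separately. It computes $\dot y_1y_4-y_1\dot y_4=\frac18(y_1y_5-y_2y_4)(-16-5y_1^2+y_2^2+8y_3-5y_4^2+y_5^2)$, which vanishes on $L$ by \thref{thm:first_identity}. It then separates variables, $\dot y_1/y_1=\dot y_4/y_4$, so $y_1/y_4$ is constant and fixed by the initial data; $\dot y_2y_5-y_2\dot y_5$ gives the second relation the same way. That is a one-line computation, but it divides by $y_1,y_4$. It also silently assumes the ratio cannot change at a common zero of $y_1,y_4$, which needs analyticity or uniqueness to justify.

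You couple the pairs instead. On $\{y_1y_5=y_2y_4\}$ the system is $\dot u=Cu-a\,v$, $\dot v=Cv+b\,u$ with scalar $a,b$. Hence the common line of $u=(y_1,y_4)$ and $v=(y_2,y_5)$ cannot turn, and $\{u=v=0\}$ is invariant. This buys a proof that is honest at zeros and at degenerate initial data.

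Tighten one point. You work where $u,v$ are both nonzero and then patch ``isolated instants'' by continuity, but isolation is unproved, and e.g.\ $u\equiv0$ is not excluded. Instead, near any $t$ with $(u,v)\neq0$, define $e$ from whichever of $u,v$ is nonzero. Then $u=re$, $v=se$, and the component of $\dot u=(Cr-as)e$ orthogonal to $e$ gives $r\dot e=0$; likewise $s\dot e=0$. So the line is locally constant on a connected time interval, hence constant. Differentiating $u\wedge v=0$ only returns $2C\,u\wedge v=0$, so that is not the computation you want.

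Two slips are worth correcting. First, with $R=y_1^2+y_2^2+y_4^2+y_5^2$, on the limit set $\dot y_1=Cy_1-(y_3+\frac18R-2)y_2$ but $\dot y_2=Cy_2+(y_3+\frac18R+2)y_1$. So $a=b-4$, not $a=b=S$. Nothing in your argument uses $a=b$, only that they are scalars, so this is harmless.

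Second, and more importantly, the obstacle that made you abandon the $(W,V)$ route is not there. $V(0)=y_{40}y_{20}-y_{10}y_{50}=-(y_{10}y_{50}-y_{20}y_{40})$, which is precisely the quantity \thref{thm:first_identity} kills. Since $\dot W=CW-aV$, $\dot V=CV+bW$ and $W(0)=V(0)=0$, uniqueness for this linear system gives $W\equiv0$ immediately. For the second relation, $W_2=y_{50}y_2-y_{20}y_5$ and $V_2=y_{50}y_1-y_{20}y_4$ satisfy $\dot W_2=CW_2+bV_2$, $\dot V_2=CV_2-aW_2$, with $V_2(0)=y_{10}y_{50}-y_{20}y_{40}=0$. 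That version needs no division, no case split and no continuity argument, and is the cleanest proof of the theorem.
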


\section{Differential equation on a limit set}
Let us consider the system of differential equations on a limit set.
First, let us assume that $y_{10} \neq 0$ and $y_{20} \neq 0$.
Then, by combining \eqref{eq:first_identity} and \eqref{eq:second_identity}, we get
\begin{equation}
  \left\{
  \begin{array}{l}
    K = \dfrac{y_{40}}{y_{10}} = \dfrac{y_{50}}{y_{20}}, \\[4mm]
    y_4 = K \cdot y_1, \\[4mm]
    y_5 = K \cdot y_2.
  \end{array}
  \right.
\end{equation}
Substituting $y_4$ and $y_5$ into \eqref{eq:full_system}, we get
\begin{equation} \label{eq:limit_set:diff_eq}
  \left\{
  \begin{array}{l}
  \dot y_1 = C y_1 - [y_3 + \frac 18  (1 + K^2)(y_1^2 + y_2^2)] y_2 + 2 y_2, \\[4mm]
  \dot y_2 = C y_2 + [y_3 + \frac 18 (1+K^2)(y_1^2+y_2^2)] y_1 + 2 y_1, \\[4mm]
  \dot y_3 = D (1+K^2) y_1 y_2 + E y_3 + F, \\[4mm]
  y_4 = K y_1, \\[4mm]
  y_5 = K y_2, \\
  \end{array}
  \right.
\end{equation}
where $K$ is a new parameter that depends on $y_{40}$ and $y_{50}$.

System of equations \eqref{eq:limit_set:diff_eq} is a system of differential equations of third order,
which is much simpler than the original system \eqref{eq:full_system}.

\section{Limit sets at large values of damping parameters}
Next, let us consider another quantity of \eqref{eq:full_system}:
\begin{equation}
    \begin{array}{l}
    \dfrac{d}{dt}(y_1^2+y_2^2+y_4^2+y_5^2) = 2 (y_1\dot y_1 + y_2\dot y_2 + y_4 \dot y_4 + y_5 \dot y_5) \\[4mm]
    = 2 y_1 \Big[ Cy_{1} \ccancel{red}{-[y_{3}+\frac18(y_{1}^2+y_{2}^2+y_{4}^2+y_{5}^2)]y_{2}} \ccancel{green}{-\frac34(y_{1}y_{5}-y_{2}y_{4})y_{4}} +2y_{2} \Big] \\[4mm]
    + 2 y_2 \Big[ Cy_{2} \ccancel{red}{+[y_{3}+\frac18(y_{1}^2+y_{2}^2+y_{4}^2+y_{5}^2)]y_{1}} \ccancel{cyan}{-\frac34(y_{1}y_{5}-y_{2}y_{4})y_{5}} +2y_{1} \Big] \\[4mm]
    + 2 y_4 \Big[ Cy_{4} \ccancel{blue}{-[y_{3}+\frac18(y_{1}^2+y_{2}^2+y_{4}^2+y_{5}^2)]y_{5}} \ccancel{green}{+\frac34(y_{1}y_{5}-y_{2}y_{4})y_{1}} +2y_{5} \Big] \\[4mm]
    + 2 y_5 \Big[ Cy_{5} \ccancel{blue}{+[y_{3}+\frac18(y_{1}^2+y_{2}^2+y_{4}^2+y_{5}^2)]y_{4}} \ccancel{cyan}{+\frac34(y_{1}y_{5}-y_{2}y_{4})y_{2}} +2y_{4} \Big] \\[5mm]
    = 2 C (y_1^2 + y_2^2 + y_4^2 + y_5^2) + 8 (y_1 y_2 + y_4 y_5).
    \end{array}
\end{equation}
Next, let's assume that we can rewrite the latter equation in a canonical form:
$$
\begin{array}{l}
  2C(y_1^2+y_2^2 + y_4^2 + y_5^2) + 8 (y_1y_2 + y_4y_5) \\[4mm]
  = P (y_1+y_2)^2 + Q(y_1-y_2)^2 + R(y_4+y_5)^2 + S(y_4-y_5)^2 \\[4mm]
  = (P+Q) (y_1^2+y_2^2) + 2(P-Q)y_1y_2 + (R+S) (y_4^2+y_5^2) + 2(R-S)y_4y_5.
  \end{array}
$$
Solving for $P$, $Q$, $R$ and $S$, we get
$$
\left\{
\begin{array}{l}
P+Q = 2C, \\
2(P-Q) = 8, \\
R+S = 2C, \\
2(R-S) = 8,
\end{array}
\right.
\iff
\left\{
\begin{array}{l}
P = C+2 \\
Q= C-2, \\
R = C+2, \\
S = C-2.
\end{array}
\right.
$$
Substituting $P$, $Q$, $R$ and $S$ back into the original equation, we get
\begin{equation}
  \begin{array}{rl}
  \dfrac{d}{dt}(y_1^2+y_2^2+y_4^2+y_5^2) & = (C+2)(y_1+y_2)^2 + (C-2)(y_1-y_2)^2 \\ 
    & + (C+2)(y_4+y_5)^2 + (C-2)(y_4-y_5)^2.
  \end{array}
\end{equation}
Let us assume that $C\leq -2$. Then we deduce that
$$
\dfrac{d}{dt}(y_1^2+y_2^2+y_4^2+y_5^2) < 0,
$$
which implies that
$$
y_1^2+y_2^2+y_4^2+y_5^2 \tox{t\to\infty} 0.
$$

Therefore the only possible limit set in the system \eqref{eq:full_system} for $C\leq -2$ is equilibrium position
\begin{equation}
\tilde\y = (0,0,-F/E,0,0).
\end{equation}

\section{Conclusion}
We proved three properties of limit sets of the averaged pendulum--motor system.

First, any limit set must satisfy a certain algebraic expression for the phase variables (\thref{thm:first_identity}).

Second, we proved that on any limit set, phase variables $(y_1, y_4)$ and $(y_2, y_5)$ are linearly dependent (\thref{thm:second_identity}).

Together these allow to reduce the dynamics on any limit set to a three-dimensional system.

Finally, when the damping parameter $C$ is sufficiently large, the only limit set possible is a single equilibrium point.
\printbibliography
S.V. Donetskyi, Institute of Mathematics of the National Academy of Sciences of Ukraine, st. Tereschenkivska 3, 01024, Kyiv, Ukraine dsvshka@gmail.com.

A.Yu. Shvets, Institute of Mathematics of the National Academy of Sciences of Ukraine, st. Tereschenkivska 3, 01024, Kyiv, Ukraine oshvets@imath.kiev.ua.
\end{document}